\documentclass[reqno,11pt]{amsart}
\usepackage{amssymb,amsmath,mathrsfs}
\usepackage{enumerate,tikz}
\usetikzlibrary{shapes.geometric}

\evensidemargin20pt
\oddsidemargin20pt
\textwidth6in

\numberwithin{equation}{section}
\newtheorem{theorem}{Theorem}
\newtheorem{lemma}[theorem]{Lemma}
\newtheorem{corollary}[theorem]{Corollary}
\newtheorem{proposition}[theorem]{Proposition}

\theoremstyle{definition}

\def\p{\hat p}

\title[Colored partitions of a convex polygon by noncrossing diagonals]{
	Colored partitions of a convex polygon \\ by noncrossing diagonals}
\author{Daniel Birmajer}
\address{Department of Mathematics\\ Nazareth College\\ 4245 East Ave.\\ Rochester, NY 14618}
\author{Juan B. Gil}
\address{Penn State Altoona\\ 3000 Ivyside Park\\ Altoona, PA 16601}
\author{Michael D. Weiner}

\begin{document}
\maketitle

\begin{abstract}
For any positive integers $a$ and $b$, we enumerate all colored partitions made by noncrossing diagonals of a convex polygon into polygons whose number of sides is congruent to $b$ modulo $a$. For the number of such partitions made by a fixed number of diagonals, we give both a recurrence relation and an explicit representation in terms of partial Bell polynomials. We use basic properties of these polynomials to efficiently incorporate restrictions on the type of polygons allowed in the partitions. 
\end{abstract}

\section{Introduction}
In this paper we consider the problem of counting the number of colored partitions of a convex polygon, allowing restrictions on the shape of the underlying parts. More precisely, given $a, b\in\mathbb{N}$, and $\mathbf{c}=(c_1,c_2,\dots)$ with $c_j\in\mathbb{N}_0$, we are interested in the set
\smallskip
\begin{quote}
$\mathscr{P}^{\mathbf{c}}_{a,b}(r,k)$ of colored partitions of a convex $(r+2)$-gon made by $k-1$ noncrossing diagonals into polygons whose number of sides is congruent to $b$ modulo $a$, and each $(aj+b)$-gon may be colored in $c_j$ ways.
\end{quote}
This is a generalization of a classical problem solved by Cayley in his paper ``On the partition of a polygon'' from 1891, see \cite{Cayley}. At the beginning of his paper, Cayley refers to Kirkman \cite{Kirkman} and to Taylor and Rowe \cite{TaylorRowe} for a historical account on the problem.

\smallskip
Observe that for any partition in $\mathscr{P}^{\mathbf{c}}_{a,b}(r,k)$ consisting of $(aj_{\ell}+b)$-gons, $\ell=1,\dots,k$, we must have
\begin{equation*}
  r+2 = (aj_1+b)+\dots+(aj_k+b) - 2(k-1).
\end{equation*}
Thus, letting $n=j_1+\dots+j_k$, we get $r=an+(b-2)k$ and so
\begin{equation*}
|\mathscr{P}^{\mathbf{c}}_{a,b}(r,k)|= 0 \text{ unless } r=an+(b-2)k \text{ for some } n\in\mathbb{N}.
\end{equation*}
Our first result concerns a recurrence relation for the cardinality of $\mathscr{P}^{\mathbf{c}}_{a,b}(r,k)$.
\begin{theorem}\label{thm:qrecursion}
Let $a, b\in\mathbb{N}$, $\mathbf{c}=(c_1,c_2,\dots)$, and let $\p(n,k)=|\mathscr{P}^{\mathbf{c}}_{a,b}(an+(b-2)k,k)|$. Then, for $k>1$, we have
\begin{equation}\label{eqn:qrecursion}
 \frac{2(k-1)}{r+2}\p(n,k) = \sum_{\ell=1}^{k-1}\sum_{m=\ell}^{n-1} \p(n-m,k-\ell)\p(m,\ell).
\end{equation}
\end{theorem}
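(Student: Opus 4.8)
The plan is to establish \eqref{eqn:qrecursion} by a double count of a set of ``rooted'' partitions. Fix a convex $(r+2)$-gon with vertices $v_0,\dots,v_{r+1}$ listed in cyclic order, where $r=an+(b-2)k$, and let $S$ denote the set of pairs $(P,\vec{d}\,)$ in which $P\in\mathscr{P}^{\mathbf{c}}_{a,b}(r,k)$ and $\vec{d}$ is one of the $k-1$ diagonals of $P$ equipped with an orientation (equivalently, a diagonal of $P$ together with a choice of which of its two sides is ``left''). Since each partition counted by $\p(n,k)$ uses exactly $k-1$ diagonals and each may be oriented in two ways,
\[
|S|=2(k-1)\,\p(n,k).
\]

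For the second count I classify a pair $(P,\vec{d}\,)$ by the sub-polygon lying to the left of $\vec{d}$. An oriented diagonal $\vec{d}=(v_i,v_j)$ cuts the $(r+2)$-gon into a sub-polygon $Q_1$ bounded by $d$ and the arc running from $v_i$ to $v_j$ on its left, and a sub-polygon $Q_2$ bounded by $d$ and the complementary arc; since $d$ is a diagonal of $P$, every part of $P$ lies inside $Q_1$ or inside $Q_2$, so $P$ restricts to colored partitions $P_1$ of $Q_1$ and $P_2$ of $Q_2$ with $P=P_1\sqcup P_2$ as collections of colored polygons. If $P_1$ has $\ell$ parts, which are $(aj_t+b)$-gons with $j_1+\dots+j_\ell=m$, then the side count preceding the theorem shows that $Q_1$ is an $(am+(b-2)\ell+2)$-gon while $Q_2$ is an $(a(n-m)+(b-2)(k-\ell)+2)$-gon carrying the remaining $k-\ell$ parts. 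Conversely, for each $\ell$ and $m$, a choice of the starting vertex $v_i$ (there are $r+2$ of them, and each produces a genuine diagonal whenever the two $\p$-factors below are nonzero, since then $Q_1$ and $Q_2$ each have at least three sides), a colored partition $P_1$ of the left $(am+(b-2)\ell+2)$-gon, and a colored partition $P_2$ of the right $(a(n-m)+(b-2)(k-\ell)+2)$-gon, glue along $d$ into a unique element of $S$. This gives
\[
|S|=(r+2)\sum_{\ell=1}^{k-1}\sum_{m}\p(m,\ell)\,\p(n-m,k-\ell),
\]
and since $\p(m,\ell)=0$ for $m<\ell$ while $\p(n-m,k-\ell)=0$ for $n-m<k-\ell$ (and $k-\ell\ge1$ in the range of $\ell$), only the terms with $\ell\le m\le n-1$ can be nonzero. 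Equating the two expressions for $|S|$, dividing by $r+2$, and swapping the two factors in the product yields \eqref{eqn:qrecursion}.

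The arithmetic checks on the side counts of $Q_1$ and $Q_2$, and the fact that colorings pass multiplicatively through the splitting and gluing, are routine. The step I expect to require the most care is confirming that ``glue along $d$'' is a genuine two-sided inverse of ``split at $\vec{d}$'': one must verify that every admissible starting vertex $v_i$ yields an honest diagonal of the $(r+2)$-gon, that the resulting object has exactly $k-1$ diagonals ($\ell-1$ of them interior to $Q_1$, $k-\ell-1$ interior to $Q_2$, together with $d$) and $k$ parts, each of an allowed shape, and that distinct triples $(v_i,P_1,P_2)$ produce distinct pairs in $S$ while every pair in $S$ arises exactly once. With this bijection in hand, Theorem~\ref{thm:qrecursion} follows without any generating-function input.
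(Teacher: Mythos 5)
Your proof is correct and takes essentially the same approach as the paper: your oriented diagonal is exactly the paper's ``diagonal with a flagged vertex'' (the flag being the tail of the orientation), yielding the count $2(k-1)\p(n,k)$ on one side, and your sum over the $r+2$ choices of starting vertex $v_i$ is the paper's decomposition of the marked set into $r+2$ equal-sized classes, each contributing the convolution sum $\sum_{\ell,m}\p(m,\ell)\p(n-m,k-\ell)$.
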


\medskip
A combinatorial proof of this theorem is given in Section~\ref{sec:qrecursionProof}. Incidentally, the convolution formula \eqref{eqn:qrecursion} leads to the well-known partial Bell polynomials. In fact, the quantity $\p(n,k)$ admits the following explicit representation.
\begin{theorem}\label{thm:qBell}
\begin{equation}\label{eqn:qBell}
\p(n,k) =\frac1{an+(b-1)k+1}\binom{an+(b-1)k+1}{k}\frac{k!}{n!}B_{n,k}(1!c_1,2!c_2,\dots).
\end{equation}
\end{theorem}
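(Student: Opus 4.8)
The plan is to translate the combinatorics into a functional equation for a generating function and then read off $\p(n,k)$ by Lagrange inversion; the recurrence \eqref{eqn:qrecursion} of Theorem~\ref{thm:qrecursion} is precisely of the shape produced by such an equation, so it is consistent with (and in fact equivalent to) the setup below. Distinguish one edge of the convex polygon as a \emph{root}. Every colored partition then decomposes uniquely: the part incident to the root is an $(aj+b)$-gon for some $j\ge1$ (in $c_j$ colors), and each of its remaining $aj+b-1$ sides is either an edge of the ambient polygon or a diagonal carrying a smaller rooted colored partition. Writing $\phi(t)=\sum_{j\ge1}c_jt^j$ and letting $y=y(x,z)$ be the generating function of rooted colored partitions with $x$ marking non-root edges and $z$ marking parts, this decomposition gives
\[
 y \;=\; z\,(x+y)^{b-1}\,\phi\bigl((x+y)^a\bigr).
\]
Since a partition counted by $|\mathscr{P}^{\mathbf c}_{a,b}(an+(b-2)k,k)|$ lives in an $(an+(b-2)k+2)$-gon and hence has $an+(b-2)k+1$ non-root edges, we have $\p(n,k)=[x^{an+(b-2)k+1}z^k]\,y$.

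Next I would solve for $y$. Put $v=x+y$, so $v=x+z\,v^{b-1}\phi(v^a)$, and set $u:=v-x=y$; then $u=z\,\Theta(u)$ with $\Theta(u)=(x+u)^{b-1}\phi\bigl((x+u)^a\bigr)$, a formal power series in $u$ over $\mathbb{Q}[c_1,c_2,\dots][[x]]$, so Lagrange inversion applies (with $x$ carried along as a parameter) and yields
\[
 [z^k]\,y \;=\; \frac1k\,[u^{k-1}]\,\Theta(u)^k \;=\; \frac1k\,[u^{k-1}]\,(x+u)^{(b-1)k}\,\phi\bigl((x+u)^a\bigr)^k .
\]
I would then invoke the defining generating function of the partial Bell polynomials, $\sum_{m\ge k}B_{m,k}(1!c_1,2!c_2,\dots)\tfrac{t^m}{m!}=\tfrac1{k!}\phi(t)^k$, equivalently $[t^m]\phi(t)^k=\tfrac{k!}{m!}B_{m,k}(1!c_1,2!c_2,\dots)$, to expand $\phi\bigl((x+u)^a\bigr)^k=\sum_m\tfrac{k!}{m!}B_{m,k}(\dots)(x+u)^{am}$. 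Using $[u^{k-1}](x+u)^{am+(b-1)k}=\binom{am+(b-1)k}{k-1}x^{am+(b-2)k+1}$ and then matching the coefficient of $x^{an+(b-2)k+1}$, which forces $m=n$, I obtain
\[
 \p(n,k) \;=\; \frac1k\binom{an+(b-1)k}{k-1}\,\frac{k!}{n!}\,B_{n,k}(1!c_1,2!c_2,\dots),
\]
and the elementary identity $\tfrac1k\binom{N-1}{k-1}=\tfrac1N\binom Nk$ with $N=an+(b-1)k+1$ turns this into \eqref{eqn:qBell}.

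The main obstacle is the first step: making the root-edge decomposition precise — verifying it is a bijection and that the exponents of $x$ and $z$ are bookkept correctly so that $\p(n,k)=[x^{an+(b-2)k+1}z^k]y$ — and, if the proof is to be based strictly on Theorem~\ref{thm:qrecursion}, checking that \eqref{eqn:qrecursion} together with the base case $\p(n,1)=c_n$ is equivalent to the functional equation for $y$. Everything after that is a routine Lagrange-inversion computation plus the standard generating function of the $B_{n,k}$.
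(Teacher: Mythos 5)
Your argument is correct, but it takes a genuinely different route from the paper. The paper proves \eqref{eqn:qBell} by showing that the right-hand side satisfies the same convolution recurrence \eqref{eqn:qrecursion} as $\p(n,k)$, with matching initial data $\p(n,1)=c_n$; the key step is the two-parameter convolution identity for partial Bell polynomials (Lemma~\ref{lem:BellConvolution}, quoted from an earlier paper of the authors) applied with $\tau=an+(b-1)k+2$ and $\alpha(\ell,m)=a(n-m)+(b-1)(k-\ell)+1$. You instead bypass Theorem~\ref{thm:qrecursion} entirely: the root-edge decomposition, the functional equation $y=z\,(x+y)^{b-1}\phi\bigl((x+y)^a\bigr)$, Lagrange inversion, and the exponential generating function $\sum_m B_{m,k}(1!c_1,2!c_2,\dots)\,t^m/m!=\phi(t)^k/k!$ give \eqref{eqn:qBell} directly (your intermediate form $\tfrac1k\binom{an+(b-1)k}{k-1}\tfrac{k!}{n!}B_{n,k}$ is exactly the paper's equation \eqref{eqn:qBellb}). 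The "obstacle" you flag is not a real gap: the root-face decomposition is the standard bijection underlying Fuss--Catalan-type counts, and the bookkeeping $\p(n,k)=[x^{an+(b-2)k+1}z^k]y$ is immediate since the $(an+(b-2)k+2)$-gon has that many non-root edges; the only technical point worth a sentence is that Lagrange inversion in the form $[z^k]y=\tfrac1k[u^{k-1}]\Theta(u)^k$ remains valid over the coefficient ring $\mathbb{Q}[c_1,c_2,\dots][[x]]$ even though $\Theta(0)=x^{b-1}\phi(x^a)$ is not a unit there (the solution of $y=z\Theta(y)$ with $y(0)=0$ is still unique and the identity holds by polynomiality in the coefficients of $\Theta$). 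What each approach buys: yours is self-contained, closer to Cayley's original method, and does not require the external convolution lemma; the paper's approach makes the combinatorial recurrence of Theorem~\ref{thm:qrecursion} do the work and showcases the Bell-polynomial convolution machinery that is reused later for the $d$-fold convolution of Proposition~\ref{prop:d-convolution}, which your generating-function setup would also deliver (via $y^d$) but which the paper gets uniformly from the same lemma. Your side remark that \eqref{eqn:qrecursion} is "equivalent to" the functional equation is plausible but unnecessary for your proof and can be dropped.
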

Here $B_{n,k}=B_{n,k}(x_1,x_2,\dots)$ denotes the $(n,k)$-th partial Bell polynomial in the variables $x_1,x_2,\dots,x_{n-k+1}$. For the definition and main properties of these polynomials, we refer to the book by Comtet \cite{Comtet}.

\medskip
The proof of the Theorem~\ref{thm:qBell}, given in Section~\ref{sec:pBellProof}, relies on a resourceful convolution result for partial Bell polynomials proved by the authors in \cite{BGW12}. As we will discuss in later sections, the representation \eqref{eqn:qBell} leads to a unifying approach and a simpler treatment for several polygon partition problems. In particular, notice that if $a=1$ and $b=2$, the number of colored partitions of a convex $(r+2)$-gon into $(j+2)$-gons, $j=1,\dots,r$, made by $k-1$ noncrossing diagonals, is given by
\begin{align*} 
 \p(r,k) &=\frac1{r+k+1}\binom{r+k+1}{k}\frac{k!}{r!}B_{r,k}(1!c_1,2!c_2,\dots) \\
 &=\frac1{r+1}\binom{r+k}{k}\frac{k!}{r!}B_{r,k}(1!c_1,2!c_2,\dots).
\end{align*}
If $c_j=1$ for every $j$ (i.e. only one color for each partition), then $\frac{k!}{r!}B_{r,k}(1!,2!,\dots) = \binom{r-1}{k-1}$ and we recover Cayley's formula 
\begin{equation*} 
   \p(r,k) = \frac{1}{r+1} \binom{r+k}{k}\binom{r-1}{k-1}, \;\; r, k\in\mathbb{N}.
\end{equation*}
Alternative proofs of this formula can be found in \cite{Stanley}, \cite{PS2000}, and most recently in \cite{Gaiffi}.

In Section~\ref{sec:applications}, we consider more examples that illustrate the versatility of formula \eqref{eqn:qBell}. More specifically, we examine several scenarios where only specific types of polygons are included or excluded in the partitions. Some of the sequences discussed in this section were considered by Smiley \cite{Smiley} using reversion of generating functions. 

In the last section, we enumerate a subset of $\mathscr{P}^{\mathbf{c}}_{a,b}(r,k)$ in which partitions of a given convex polygon are required to contain a $(d+1)$-gon over a fixed side of the polygon. This related problem is solved by means of a multifold convolution formula obtained via corresponding convolutions of partial Bell polynomials.

Overall, we find that partial Bell polynomials provide a very efficient tool for the study of combinatorial problems that involve recurrence relations of convolution type. This includes similar counting problems, notably in the context of rooted trees and Dyck paths.

\section{Proof of Theorem~\ref{thm:qrecursion}} 
\label{sec:qrecursionProof}
In this section we give a combinatorial proof for the recursion given in Theorem~\ref{thm:qrecursion}.
Recall that $\mathscr{P}^{\mathbf{c}}_{a,b}(r,k)$ is the set of colored partitions of a convex $(r+2)$-gon made by $k-1$ diagonals into polygons whose number of sides is congruent to $b$ modulo $a$, where each $(aj+b)$-gon may be colored in $c_j$ ways.

Let $k>1$ and consider the set
\smallskip
\begin{quote}
$\mathscr{R}^{\mathbf{c}}_{a,b}(r,k)$ of colored partitions in $\mathscr{P}^{\mathbf{c}}_{a,b}(r,k)$ with the additional structure of a specified diagonal with one of its vertices flagged.
\end{quote}
\smallskip
Since every partition by $k-1$ diagonals has $2(k-1)$ possibilities to flag a vertex, we have
\begin{equation} \label{eqn:split1}
  \big|\mathscr{R}^{\mathbf{c}}_{a,b}(r,k)\big|=2(k-1)\big|\mathscr{P}^{\mathbf{c}}_{a,b}(r,k)\big|.
\end{equation}
On the other hand, for any $(r+2)$-gon we can write $\mathscr{R}^{\mathbf{c}}_{a,b}(r,k)$ as a disjoint union
\[ \mathscr{R}^{\mathbf{c}}_{a,b}(r,k) = \bigsqcup_{v} R_v(r,k) \]
over all vertices $v$ of the polygon, where $R_v(r,k)$ is the subset of partitions having $v$ as their flagged vertex. Since $|R_v(r,k)|$ is independent of $v$, we have
\begin{equation} \label{eqn:split2}
 \big|\mathscr{R}^{\mathbf{c}}_{a,b}(r,k)\big|=(r+2)|R_v(r,k)|.
\end{equation}

Let $v$ be an arbitrary flagged vertex. Any partition in $R_v(r,k)$ has a distinguished diagonal (containing $v$) which dissects the $(r+2)$-gon into two admissible polygons, one with $\ell-1$ diagonals $(0<\ell<k)$ and the other with $k-\ell$ diagonals. Denote these polygons by $P_\ell$ and $P'_{\ell}$, respectively. As discussed in the introduction, we must have $r=an+(b-2)k$ for some $n\in\mathbb{N}$. With a similar argument, one concludes that $P_\ell$ must have $am+(b-2)\ell+2$ sides (for some $m$), while $P'_{\ell}$ must have $a(n-m)+(b-2)(k-\ell)+2$ sides. Thus the number of colored partitions for $P_\ell$ is $\p(m,\ell)$ and the number for $P'_{\ell}$ is $\p(n-m,k-\ell)$, so there are $\p(n-m,k-\ell)\cdot\p(m,\ell)$ possible colored partitions containing the specified diagonal.

Summing over all possible admissible values of $\ell$ and $m$ (namely, $1\le\ell\le k-1$ and $1\le m\le n-1$) gives
\begin{equation*}
 |R_v(r,k)|=\sum_{\ell=1}^{k-1}\sum_{m=\ell}^{n-1} \p(n-m,k-\ell)\p(m,\ell).
\end{equation*}
Finally, using this identity together with \eqref{eqn:split1} and \eqref{eqn:split2}, we arrive at \eqref{eqn:qrecursion}.  \qed

\section{Proof of Theorem~\ref{thm:qBell}}
\label{sec:pBellProof}
As discussed in the introduction, $\p(n,1)=c_n$ for every $n$, so identity \eqref{eqn:qBell} is satisfied when $k=1$.
To establish the identity for every $k$, we will show that the expression involving the partial Bell polynomials satisfy the same recurrence relation as $\p(n,k)$ for every $n,k\in\mathbb{N}$.

First let us recall the following convolution formula:
\begin{lemma}[{\cite[Corollary~11]{BGW12}}] \label{lem:BellConvolution}
Let $\alpha(\ell,m)$ be a polynomial in $\ell$ and $m$ of degree at most one. For any sequence $x=(x_1,x_2,\dots)$ and any $\tau\in\mathbb{C}$, we have
\begin{equation*}
\sum_{\ell=0}^{k}\sum_{m=\ell}^n \frac{\tau\binom{\alpha(\ell,m)}{k-\ell}\binom{\tau-\alpha(\ell,m)}{\ell}\binom{n}{m}}{\alpha(\ell,m)\big(\tau-\alpha(\ell,m)\big)\binom{k}{\ell}} B_{m,\ell}(x) B_{n-m,k-\ell}(x)
=\tfrac{\tau-\alpha(0,0)+\alpha(k,n)}{\alpha(k,n)(\tau-\alpha(0,0))}\binom{\tau}{k} B_{n,k}(x).
\end{equation*}
\end{lemma}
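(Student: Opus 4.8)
The plan is to peel the partial Bell polynomials off both sides and reduce the claim to a purely binomial (Hagen--Rothe type) identity. Write $\alpha(\ell,m)=\alpha_0+\beta\ell+\gamma m$ with $\alpha_0=\alpha(0,0)$, and recall that for a multi-index $\mathbf{k}=(k_1,k_2,\dots)$ with $|\mathbf{k}|:=\sum_j k_j=k$ and $\|\mathbf{k}\|:=\sum_j jk_j=n$ the coefficient of the monomial $x^{\mathbf{k}}=\prod_j x_j^{k_j}$ in $B_{n,k}(x)$ is $M(\mathbf{k})=\frac{n!}{\prod_j (j!)^{k_j}\,k_j!}$ (and $0$ if the two degree conditions fail). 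Since every term on both sides is a scalar multiple of some $B_{\cdot,\cdot}(x)$, and these are spanned by the monomials $x^{\mathbf{k}}$, it suffices to compare the coefficient of a fixed $x^{\mathbf{k}}$. First I would record the elementary ratio identity
\begin{equation*}
M(\mathbf{k}')\,M(\mathbf{k}-\mathbf{k}')=\binom{n}{m}^{-1}\Big(\prod_j\binom{k_j}{k'_j}\Big)M(\mathbf{k}),\qquad m=\|\mathbf{k}'\|,
\end{equation*}
valid for every sub-multi-index $\mathbf{k}'\le\mathbf{k}$ (componentwise), which is just Vandermonde bookkeeping on the factorials.

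Next I would substitute this into the coefficient of $x^{\mathbf{k}}$ on the left-hand side. Writing the $x^{\mathbf{k}}$-coefficient of $B_{m,\ell}B_{n-m,k-\ell}$ as $\sum_{\mathbf{k}'}M(\mathbf{k}')M(\mathbf{k}-\mathbf{k}')$ (sum over $\mathbf{k}'\le\mathbf{k}$ with $|\mathbf{k}'|=\ell$, $\|\mathbf{k}'\|=m$) and using the ratio identity, the factor $\binom{n}{m}$ cancels and the double sum over $(\ell,m)$ collapses to a single sum over $\mathbf{k}'\le\mathbf{k}$, with $\ell=|\mathbf{k}'|$ and $\alpha'=\alpha_0+\sum_j(\beta+\gamma j)k'_j$. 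After dividing by $M(\mathbf{k})$ the entire statement becomes the coefficient-free identity
\begin{equation*}
\sum_{\mathbf{k}'\le\mathbf{k}}\frac{\tau\binom{\alpha'}{k-|\mathbf{k}'|}\binom{\tau-\alpha'}{|\mathbf{k}'|}}{\alpha'(\tau-\alpha')\binom{k}{|\mathbf{k}'|}}\prod_j\binom{k_j}{k'_j}=\frac{\tau-\alpha_0+\alpha(k,n)}{\alpha(k,n)(\tau-\alpha_0)}\binom{\tau}{k},
\end{equation*}
in which the Bell polynomials no longer appear and each coordinate $j$ carries its own increment $\beta_j=\beta+\gamma j$.

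To prove this reduced identity I would treat it as a multivariate Hagen--Rothe convolution. As a sanity check and base case, when $\gamma=0$ all increments coincide, the inner sum $\sum_{|\mathbf{k}'|=\ell}\prod_j\binom{k_j}{k'_j}=\binom{k}{\ell}$ cancels the denominator $\binom{k}{\ell}$, and what remains is exactly the classical Rothe--Hagen identity with $z=-\beta$, $x=\tau-\alpha_0$, $y=\alpha_0+\beta k$; a short computation confirms it returns the stated right-hand side. For general $\gamma$ I would argue by induction on the number of nonzero parts of $\mathbf{k}$, peeling off one coordinate at a time and applying the univariate Rothe--Hagen identity with increment $\beta_j$; equivalently, one can encode the weights $\frac{s}{s+zj}\binom{s+zj}{j}=[t^j]\mathcal{B}_z(t)^s$ through the generalized binomial series $\mathcal{B}_z$ and read the identity off from the multiplicativity $\mathcal{B}_z^{s_1}\mathcal{B}_z^{s_2}=\mathcal{B}_z^{s_1+s_2}$ together with $\phi^{\ell}\phi^{k-\ell}=\phi^{k}$ at the level of generating functions. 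The partial-fraction splittings $\frac{\tau}{\alpha(\tau-\alpha)}=\frac1\alpha+\frac1{\tau-\alpha}$ on the left and $\frac{\tau-\alpha_0+\alpha(k,n)}{\alpha(k,n)(\tau-\alpha_0)}=\frac1{\alpha(k,n)}+\frac1{\tau-\alpha_0}$ on the right match term by term and are convenient for organizing the induction.

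The main obstacle I anticipate is precisely the coupling introduced by the $\gamma m$ term: because $\alpha'$ depends on the weighted size $\|\mathbf{k}'\|$ and not only on $|\mathbf{k}'|$, the two summation indices $\ell$ and $m$ cannot be decoupled, the Vandermonde collapse that trivializes the $\gamma=0$ case is unavailable, and the single-coordinate sub-sums in the induction retain the awkward factor $1/\binom{k}{|\mathbf{k}'|}$. Handling this --- either by carrying a two-parameter inductive hypothesis robust enough to absorb the cross term, or by the Lagrange-inversion/generalized-binomial-series bookkeeping --- is where essentially all of the work lies; once the increments are allowed to differ per coordinate, the reduction above shows that the Bell-polynomial content of the lemma is entirely formal.
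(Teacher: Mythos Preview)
The paper does not prove this lemma at all: it is quoted verbatim as \cite[Corollary~11]{BGW12} and then used as a black box in the proof of Theorem~\ref{thm:qBell} and in Proposition~\ref{prop:d-convolution}. So there is no ``paper's own proof'' to compare against; what you have written is an attempt to supply a proof the authors deliberately outsourced.

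On its merits, your reduction step is correct: extracting the coefficient of a fixed monomial $x^{\mathbf{k}}$ and using the multinomial ratio $M(\mathbf{k}')M(\mathbf{k}-\mathbf{k}')=\binom{n}{m}^{-1}\big(\prod_j\binom{k_j}{k'_j}\big)M(\mathbf{k})$ does cancel the $\binom{n}{m}$ and collapse the $(\ell,m)$-sum to a single sum over $\mathbf{k}'\le\mathbf{k}$, leaving a Bell-polynomial-free identity of Hagen--Rothe type with per-coordinate increments $\beta_j=\beta+\gamma j$. The $\gamma=0$ check via classical Rothe--Hagen is also fine.

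However, the proposal is not a proof but a plan, and you say so yourself: the entire content of the lemma lives in the $\gamma\neq0$ case, and there you only gesture at two strategies (induction on the support of $\mathbf{k}$; generalized binomial series) without carrying either through. The obstacle you flag is real and is exactly what blocks the naive induction: when you peel off a single coordinate $j_0$, the inner sum over $k'_{j_0}$ still carries the global factor $\binom{k}{|\mathbf{k}'|}^{-1}$, which depends on all the other $k'_j$ simultaneously, so the univariate Rothe--Hagen identity does not apply to the inner sum as written. Neither your ``two-parameter inductive hypothesis'' nor the $\mathcal{B}_z$ bookkeeping is made precise enough to see that it closes. As it stands, you have correctly isolated the combinatorial core but left it unproved; to turn this into an actual argument you would need either to reformulate the reduced identity so the $\binom{k}{\ell}^{-1}$ is absorbed before peeling coordinates, or to follow the route in \cite{BGW12}, which proves the convolution at the level of Bell polynomials directly rather than after monomial extraction.
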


\medskip
Note that
\begin{equation} \label{eqn:product}
 \frac{\binom{\alpha(\ell,m)}{k-\ell}\binom{\tau-\alpha(\ell,m)}{\ell}
  \binom{n}{m}}{\alpha(\ell,m)\big(\tau-\alpha(\ell,m)\big)\binom{k}{\ell}}
 = \frac{n!}{k!}\left(\frac{\binom{\alpha(\ell,m)}{k-\ell}}{\alpha(\ell,m)}\frac{(k-\ell)!}{(n-m)!}
   \right)\! \left(\frac{\binom{\tau-\alpha(\ell,m)}{\ell}}{\big(\tau-\alpha(\ell,m)\big)}\frac{\ell!}{m!} \right)\!.
\end{equation}
Fix $a$, $b$, $(c_j)$, and let
\begin{equation*}
q(n,k) =\frac{\binom{an+(b-1)k+1}{k}}{an+(b-1)k+1}\frac{k!}{n!}B_{n,k}(1!c_1,2!c_2,\dots).
\end{equation*}
Letting $x_j=j!c_j$, $\tau=an+(b-1)k+2$,  and $\alpha(\ell,m)=a(n-m)+(b-1)(k-\ell)+1$, equation \eqref{eqn:product} gives
\begin{equation*}
 \frac{\binom{\alpha(\ell,m)}{k-\ell}\binom{\tau-\alpha(\ell,m)}{\ell}
 \binom{n}{m}}{\alpha(\ell,m)\big(\tau-\alpha(\ell,m)\big)\binom{k}{\ell}} B_{m,\ell}B_{n-m,k-\ell}
 = \frac{n!}{k!} q(n-m,k-\ell)q(m,\ell),
\end{equation*}
and so by Lemma~\ref{lem:BellConvolution},
\begin{equation*}
\sum_{\ell=0}^{k}\sum_{m=\ell}^n \tau \frac{n!}{k!} q(n-m,k-\ell)q(m,\ell) = 2\binom{\tau}{k} B_{n,k}(x),
\end{equation*}
which implies
\begin{align*}
 \sum_{\ell=0}^{k}\sum_{m=\ell}^n q(n-m,k-\ell)q(m,\ell)
 &= \frac{2}{an+(b-1)k+2}\binom{an+(b-1)k+2}{k} \frac{k!}{n!} B_{n,k}(x) \\
 &= \frac{2}{an+(b-2)k+2}\binom{an+(b-1)k+1}{k} \frac{k!}{n!} B_{n,k}(x) \\
 &= \frac{2(an+(b-1)k+1)}{an+(b-2)k+2}\,q(n,k).
\end{align*}
Therefore,
\begin{align*}
 \sum_{\ell=1}^{k-1}\sum_{m=\ell}^{n-1} q(n-m,k-\ell)q(m,\ell)
 &= \frac{2(an+(b-1)k+1)}{an+(b-2)k+2}\,q(n,k) - 2q(n,k) \\
 &= \frac{2(k-1)}{an+(b-2)k+2}\,q(n,k).
\end{align*}
In other words, $q(n,k)$ satisfies the same recurrence as $\p(n,k)$. Since $\p(n,1)=c_n=q(n,1)$ for every $n$, we conclude $\p(n,k)=q(n,k)$ for every $n$ and $k$. This proves \eqref{eqn:qBell}. \qed

\section{Enumeration of some special partitions}
\label{sec:applications}

In this section we revisit some classical examples of polygon partitions and consider several special choices for the type of polygons allowed in the partitions.

Recall that $\p(n,k)=|\mathscr{P}^{\mathbf{c}}_{a,b}(an+(b-2)k,k)|$ denotes the number of colored partitions of a convex polygon with $an+(b-2)k+2$ sides into $k$ polygons whose number of sides is congruent to $b \pmod a$, and each $(aj+b)$-gon may be colored in $c_j$ ways. In Theorem~\ref{thm:qBell} we gave a representation for $\p(n,k)$, which can be rewritten as
\begin{equation} \label{eqn:qBellb}
\p(n,k) =\frac{1}{k}\binom{an+(b-1)k}{k-1}\frac{k!}{n!}B_{n,k}(1!c_1,2!c_2,\dots).
\end{equation}
\subsection*{Colored polygon partitions}
Let $y_n$ be the total number of colored partitions of a convex $(n+2)$-gon by noncrossing diagonals.  There are $c_1$ possible ways to color a triangle ($n=1$). For $n\ge 2$, we can use identity \eqref{eqn:qBellb} with $a=1$ and $b=2$, together with the identity $\binom{n+k}{k-1}=\frac{k}{n+1} \binom{n+k}{k}$, to obtain
\begin{equation} \label{eqn:totalClassical}
  y_1=c_1,\quad y_n =\frac{1}{n+1} \sum_{k=1}^{n} \binom{n+k}{k}\frac{k!}{n!} B_{n,k}(1!c_1,2!c_2,\dots) \text{ for }n\ge 2.
\end{equation}
If $c_j=1$ for every $j$ (only one color for each type of polygon), then $\frac{k!}{n!}B_{n,k}(1!,2!,\dots) = \binom{n-1}{k-1}$ for $n,k\in\mathbb{N}$ and $(y_n)$ gives the Schr\"oder numbers (A001003 in \cite{Sloane})
\begin{equation*}
  y_n=\frac{1}{n+1} \sum_{k=1}^{n} \binom{n+k}{k}\binom{n-1}{k-1} \text{ for }n\ge 1.
\end{equation*}
\subsection*{Partitions into $(q+2)$-gons}
The total number of partitions into $(q+2)$-gons is obtained from \eqref{eqn:totalClassical} with $c_j=0$ for every $j\not=q$. Clearly, $y_n=0$ unless $n$ is a multiple of $q$, and if $n=qm$, we then get
\begin{align*}
  y_{qm} &=\sum_{k=1}^{qm} \frac{1}{k}\binom{qm+k}{k-1}\frac{k!}{(qm)!}B_{qm,k}(0,\dots,q!c_q,0,\dots)\\
	&=\frac{1}{m}\binom{(q+1)m}{m-1}\frac{m!}{(qm)!}\frac{(qm)!}{m!}c_q^m \\
	&= \frac{1}{qm+1}\binom{(q+1)m}{m}c_q^m.
\end{align*}
If $c_q=1$, these are the Fuss-Catalan numbers.
\subsection*{Partitions into triangles and quadrilaterals}
Assume we are interested in partitions made only by triangles and quadrilaterals. For an $(n+2)$-gon, the total number of such partitions can be obtained from \eqref{eqn:totalClassical} by choosing $c_j=0$ for $j\ge 3$. Thus $y_1=c_1$ and
\begin{equation*}
  y_n = \frac{1}{n+1}\sum_{\frac{n}{2}\le k\le n} \binom{n+k}{k} \binom{k}{n-k} c_1^{2k-n}c_2^{n-k} \text{ for } n\ge 2,
\end{equation*}
since $\frac{k!}{n!} B_{n,k}(1!c_1,2!c_2,0,\dots)=\binom{k}{n-k} c_1^{2k-n}c_2^{n-k}$. If $c_1=c_2=1$, $(y_n)$ is A001002 in \cite{Sloane}.
\subsection*{Partitions into $(q+2)$-gons and $(q+3)$-gons}
The total number of partitions of a convex $(n+2)$-gon into polygons with $q+2$ and $q+3$ sides, $q\in\mathbb{N}$, can be obtained from \eqref{eqn:totalClassical} with the choice $c_j=0$ for every $j\not\in\{q, q+1\}$. For this, let us examine the corresponding partial Bell polynomial. Using formulas [3n] and [3n'] in \cite[Sec.~3.3]{Comtet}, we have
\begin{align*}
  B_{n,k}(0,\dots,\, &q!c_q, (q+1)!c_{q+1}, 0,\dots) \\
  &=\sum_{\kappa\le k,\, \nu\le n} \binom{n}{\nu} B_{\nu,\kappa}(0,\dots, q!c_q, 0,\dots) B_{n-\nu,k-\kappa}(0,\dots, (q+1)!c_{q+1}, 0,\dots),
\end{align*}
which is not zero only if $\nu=q\kappa$ and $n-\nu=(q+1)(k-\kappa)$. Thus we must have $\kappa=(q+1)k-n$ and $k-\kappa=n-qk$, and so
\begin{equation*}
  B_{n,k}(0,\dots,\, q!c_q, (q+1)!c_{q+1}, 0,\dots) =\frac{n!}{k!}\binom{k}{n-qk} c_q^{(q+1)k-n} c_{q+1}^{n-qk}
\end{equation*}
with the usual convention that $\binom{k}{n-qk}=0$ if $k<n-qk$. Then, for $n>q+1$, we arrive at
\begin{equation*}
 y_n =\frac{1}{n+1}\sum_{\frac{n}{q+1}\le k\le \frac{n}{q}} \binom{n+k}{k}\binom{k}{n-qk} 
 c_q^{(q+1)k-n} c_{q+1}^{n-qk}.
\end{equation*}
\subsection*{Partitions into even-gons}
The set of such partitions can be described by choosing $a=2$ and $b=2$ in $\mathscr{P}^{\mathbf{c}}_{a,b}(an+(b-2)k,k)$. More precisely, the total number of partitions of a convex $(2n+2)$-gon made by noncrossing diagonals into polygons with an even number of sides, and such that each $(2j+2)$-gon may be colored in $c_j$ ways, is given by
\begin{equation*}
 y_n = \sum_{k=1}^{n}\frac{1}{k}\binom{2n+k}{k-1}\frac{k!}{n!}B_{n,k}(1!c_1,2!c_2,\dots).
\end{equation*}

In the special case when $c_j=1$ for every $j$, we obtain (cf. A003168 in \cite{Sloane})
\begin{equation*}
 y_n = \frac{1}{n} \sum_{k=1}^{n}\binom{2n+k}{k-1}\binom{n}{k}\; \text{ for } n\ge 1.
\end{equation*}
\subsection*{Partitions into odd-gons}
If we now consider the set $\mathscr{P}^{\mathbf{c}}_{a,b}(an+(b-2)k,k)$ with $a=2$ and $b=1$, the quantity $\p(n,k)=\big|\mathscr{P}^{\mathbf{c}}_{2,1}(2n-k,k)\big|$ gives the number of partitions of a convex polygon with $2n-k+2$ sides by $k-1$ noncrossing diagonals into polygons with an odd number of sides, and such that each $(2j+1)$-gon may be colored in $c_j$ ways. According to \eqref{eqn:qBellb}, we have
\begin{equation*}
\p(n,k) =\frac{1}{k}\binom{2n}{k-1}\frac{k!}{n!}B_{n,k}(1!c_1,2!c_2,\dots).
\end{equation*}
We can then enumerate the total number of such partitions either by the number of edges (exterior sides plus diagonals) or by the number of sides. 

{\em By the number of edges:} Since $\p(n,k)$ refers to a convex polygon with $2n-k+2$ sides and $k-1$ diagonals, there is a total of  $(2n-k+2) + (k-1) = 2n+1$ edges. We now sum over all possible values of $k$ from 2 to $n$ and get
\begin{equation*}
 y_n = \sum_{k=1}^{n}\frac{1}{k} \binom{2n}{k-1}\frac{k!}{n!}B_{n,k}(1!c_1,2!c_2,\dots).
\end{equation*}
If $c_j=1$ for every $j$, then we obtain (cf. A001764 in \cite{Sloane})
\begin{equation*}
 y_n = \frac{1}{n} \sum_{k=1}^{n}\binom{2n}{k-1}\binom{n}{k} = \frac{1}{2n+1}\binom{3n}{n}\; \text{ for } n\ge 1.
\end{equation*}

{\em By the number of sides:} If the polygon has $r+2$ sides, we must have $r+2=2n-k+2$ and so $r+k=2n$. Let $\ell=\frac{r-k}{2}\in\mathbb{N}_0$. Then $k=r-2\ell$, $n=r-\ell$, and
\begin{equation*}
\p(n,k) = \frac{1}{r-2\ell}\binom{2(r-\ell)}{r-2\ell-1}\frac{(r-2\ell)!}{(r-\ell)!}B_{r-\ell,r-2\ell}(1!c_1,2!c_2,\dots).
\end{equation*}
Summing over all possible values of $\ell$ we arrive at
\begin{equation*}
 y_r = \sum_{\ell=0}^{\lfloor\frac{r-1}{2}\rfloor} \frac{1}{r-2\ell}\binom{2(r-\ell)}{r-2\ell-1}\frac{(r-2\ell)!}{(r-\ell)!}B_{r-\ell,r-2\ell}(1!c_1,2!c_2,\dots).
\end{equation*}
If $c_j=1$ for every $j$, then we obtain (cf. A049124 in \cite{Sloane})
\begin{equation*}
 y_r = \frac{1}{r+1}\sum_{\ell=0}^{\lfloor\frac{r-1}{2}\rfloor} \binom{2(r-\ell)}{r}\binom{r-\ell-1}{\ell} \; \text{ for } r\ge 1.
\end{equation*}

\subsection*{Triangle free partitions}
In order to count all polygon partitions that avoid triangles, we just need to set $c_1=0$ in \eqref{eqn:totalClassical}. Then, for $n\ge 2$,
\begin{align*}
  y_n &=\frac{1}{n+1} \sum_{k=1}^{n} \binom{n+k}{k}\frac{k!}{n!} B_{n,k}(0,2!c_2,3!c_3\dots) \\
  &=\frac{1}{n+1} \sum_{k=1}^{n-1} \binom{n+k}{k}\frac{k!}{(n-k)!} B_{n-k,k}(1!c_2,2!c_3\dots),
\end{align*}
since $B_{n,k}(0,2!c_2,3!c_3\dots)=\frac{n!}{(n-k)!}B_{n-k,k}(1!c_2,2!c_3\dots)$ by identity [3l'] in \cite[Sec.~3.3]{Comtet}.

In the case when $c_j=1$ for every $j\ge 2$, this sequence reduces to
\begin{equation*}
  y_1=0, \quad y_n =\frac{1}{n+1} \sum_{k=1}^{n-1} \binom{n+k}{k}\binom{n-k-1}{k-1} \text{ for } n\ge 2.
\end{equation*}
If we let $y_0=1$, this gives A046736 in \cite{Sloane}.
\subsection*{Partitions into $(j+2)$-gons with $j>q$ }
In this more general situation, we are looking at the total number of partitions that avoid $3$-gons, $4$-gons, and so on up to $(q+2)$-gons.  This number can be easily obtained from \eqref{eqn:totalClassical} by setting $c_1=c_2=\dots=c_q=0$. Repeated use of identity [3l'] in \cite[Section~3.3]{Comtet} gives, for $n\ge q+1$,
\begin{align*}
  y_n &=\frac{1}{n+1}\sum_{k=1}^{n} \binom{n+k}{k}\frac{k!}{n!} B_{n,k}(0,\dots,(q+1)!c_{q+1},(q+2)!c_{q+2},\dots) \\
  &=\frac{1}{n+1}\sum_{1\le k< n/q} \binom{n+k}{k}\frac{k!}{(n-qk)!} B_{n-qk,k}(1!c_{q+1},2!c_{q+2}\dots).
\end{align*}

Now, if we ignore the coloring and set $c_j=1$ for every $j\ge q+1$, we arrive at the sequence
\begin{gather*}
  y_1=\dots=y_{q}=0, \\
  y_n =\frac{1}{n+1}\sum_{1\le k <n/q} \binom{n+k}{k}\binom{n-q k-1}{k-1} \text{ for } n\ge q+1.
\end{gather*}
The special cases when $q=2$ or $q=3$ lead to the sequences A054514 and A215342 in \cite{Sloane}.
\subsection*{Partitions avoiding $(q+2)$-gons}
We conclude our illustrating examples by considering colored partitions that avoid polygons with $q+2$ sides for some fixed $q\in\mathbb{N}$. Given a convex $(n+2)$-gon, the number $y_n$ of all such partitions can be obtained from \eqref{eqn:totalClassical} simply by choosing $c_q=0$. That is,
\begin{equation*}
  y_n =\frac{1}{n+1} \sum_{k=1}^{n} \binom{n+k}{k}\frac{k!}{n!} 
  B_{n,k}(1!c_1,\dots,(q-1)!c_{q-1},0,(q+1)!c_{q+1},\dots). 
\end{equation*}
Let $\bar x_q$, $\bar e_q$, and $\bar x$ be the sequences defined as $\bar x_q=(1!c_1,\dots,(q-1)!c_{q-1},0,(q+1)!c_{q+1},\dots)$, $\bar e_q=(0,\dots,0,q!,0,\dots)$, and $\bar x = \bar e_q + \bar x_q$. Using identities [3n] and [3n'] in \cite[Sec.~3.3]{Comtet}, we then have
\begin{equation*}
 B_{n,k}(\bar x_q) = \!\sum_{\ell\le k,\, \nu\le n} \binom{n}{\nu} B_{\nu,\ell}(-\bar e_q)B_{n-\nu,k-\ell}(\bar x) 
 = \sum_{\ell\le k} \binom{n}{q\ell}\frac{(q\ell)!}{\ell!} (-1)^\ell B_{n-q\ell,k-\ell}(\bar x),
\end{equation*}
and so
\begin{equation}\label{eq:qavoid}
 y_n =\frac{1}{n+1} \sum_{k=1}^{n} \;
 \sum_{\ell\le k} (-1)^\ell \binom{n+k}{k}\binom{k}{\ell}\frac{(k-\ell)!}{(n-q\ell)!} B_{n-q\ell,k-\ell}(\bar x),
\end{equation} 
with the standard convention that $B_{\nu,\kappa}=0$ unless $\nu\ge \kappa\ge 0$, and $B_{\nu,0}=0$ for $\nu>0$.

If $c_j=1$ for every $j$ in $\bar x_q$, then $\bar x=(1!,2!,3!,\dots)$ and $\frac{(k-\ell)!}{(n-q\ell)!} B_{n-q\ell,k-\ell}(\bar x)=\binom{n-q\ell-1}{k-\ell-1}$ for $k-\ell\ge 1$ and $n-q\ell\ge 1$. In this case, the term $\ell=0$ in \eqref{eq:qavoid} gives the Schr\"oder numbers 
\[ s_n = \frac{1}{n+1} \sum_{k=1}^{n} \binom{n+k}{k}\binom{n-1}{k-1} \]
and $y_n$ can be rewritten as
\begin{align*}
 y_n &= s_n + \frac{1}{n+1} \sum_{k=1}^{n} \sum_{\ell=1}^k (-1)^\ell 
 \binom{n+k}{k}\binom{k}{\ell}\frac{(k-\ell)!}{(n-q\ell)!} B_{n-q\ell,k-\ell}(\bar x) \\
 &=s_n + \frac{1}{n+1} \sum_{\ell=1}^{\lfloor n/q\rfloor} \sum_{k=\ell}^n (-1)^\ell 
 \binom{n+k}{k}\binom{k}{\ell}\frac{(k-\ell)!}{(n-q\ell)!} B_{n-q\ell,k-\ell}(\bar x).
\end{align*} 
If $n=qm$, then
\begin{equation*}
 y_{qm} = \frac{(-1)^{m}}{qm+1} \binom{(q+1)m}{m} + \frac{1}{qm+1} \sum_{\ell=0}^{m-1}\; 
 \sum_{k=\ell+1}^{qm-q\ell+\ell} (-1)^\ell \binom{qm+k}{k}\binom{k}{\ell}\binom{qm-q\ell-1}{k-\ell-1},
\end{equation*} 
and if $n\not\equiv 0$ modulo $q$, then
\begin{equation*}
 y_{n} = \frac{1}{n+1} \sum_{\ell=0}^{\lfloor n/q\rfloor}\; \sum_{k=\ell+1}^{n-q\ell+\ell} (-1)^\ell 
 \binom{n+k}{k}\binom{k}{\ell}\binom{n-q\ell-1}{k-\ell-1}.
\end{equation*} 

In the special case when $q=2$ (partitions avoiding quadrilaterals, cf. A054515 in \cite{Sloane}), we get the explicit formula
\begin{equation*}
 y_{n} = (-1)^{\lfloor n/2\rfloor}t_n + \frac{1}{n+1} \sum_{\ell=0}^{\lfloor n/2\rfloor-1}\; \sum_{k=\ell+1}^{n-\ell} (-1)^\ell 
 \binom{n+k}{k}\binom{k}{\ell}\binom{n-2\ell-1}{k-\ell-1},
\end{equation*} 
where $t_{2m}=\frac{1}{2m+1} \binom{3m}{m}$ and $t_{2m+1}= \binom{3m+2}{m}$. 

\medskip
Cases beyond $q=2$ give interesting sequences currently not listed in the OEIS.

\section{Further examples and generalizations}
\label{sec:related}

In this section we will discuss a generalization of the following problem: 
\begin{quote}
Finding the number of partitions of a convex polygon (made by noncrossing diagonals) that have a triangle over a fixed side of the polygon. 
\end{quote}
For example, in the case of a pentagon, there are 7 possible such partitions:

\smallskip
\begin{center}
\begin{tikzpicture}
\node[regular polygon, regular polygon sides=5, minimum size=1.5cm] at (0,0) (A) {};
\draw[gray!20,fill] (A.corner 3) -- (A.corner 4) -- (A.corner 2) -- cycle;
\node[regular polygon, regular polygon sides=5, minimum size=1.54cm, draw] at (0,0) (B) {};
\draw[gray] (A.corner 4) -- (A.corner 2);
\draw[thick,blue] (A.corner 3) -- (A.corner 4);
\node[regular polygon, regular polygon sides=5, minimum size=1.5cm] at (2,0) (A) {};
\draw[gray!20,fill] (A.corner 3) -- (A.corner 4) -- (A.corner 2) -- cycle;
\node[regular polygon, regular polygon sides=5, minimum size=1.54cm, draw] at (2,0) (B) {};
\draw[gray] (A.corner 1) -- (A.corner 4) -- (A.corner 2);
\draw[thick,blue] (A.corner 3) -- (A.corner 4);
\node[regular polygon, regular polygon sides=5, minimum size=1.5cm] at (4,0) (A) {};
\draw[gray!20,fill] (A.corner 3) -- (A.corner 4) -- (A.corner 2) -- cycle;
\node[regular polygon, regular polygon sides=5, minimum size=1.54cm, draw] at (4,0) (B) {};
\draw[gray] (A.corner 4) -- (A.corner 2) -- (A.corner 5);
\draw[thick,blue] (A.corner 3) -- (A.corner 4);
\node[regular polygon, regular polygon sides=5, minimum size=1.5cm] at (6,0) (A) {};
\draw[gray!20,fill] (A.corner 3) -- (A.corner 4) -- (A.corner 1) -- cycle;
\node[regular polygon, regular polygon sides=5, minimum size=1.54cm, draw] at (6,0) (B) {};
\draw[gray] (A.corner 3) -- (A.corner 4) -- (A.corner 1) -- cycle;
\draw[thick,blue] (A.corner 3) -- (A.corner 4);
\node[regular polygon, regular polygon sides=5, minimum size=1.5cm] at (8,0) (A) {};
\draw[gray!20,fill] (A.corner 3) -- (A.corner 4) -- (A.corner 5) -- cycle;
\node[regular polygon, regular polygon sides=5, minimum size=1.54cm, draw] at (8,0) (B) {};
\draw[gray] (A.corner 3) -- (A.corner 5);
\draw[thick,blue] (A.corner 3) -- (A.corner 4);
\node[regular polygon, regular polygon sides=5, minimum size=1.5cm] at (10,0) (A) {};
\draw[gray!20,fill] (A.corner 3) -- (A.corner 4) -- (A.corner 5) -- cycle;
\node[regular polygon, regular polygon sides=5, minimum size=1.54cm, draw] at (10,0) (B) {};
\draw[gray] (A.corner 1) -- (A.corner 3) -- (A.corner 5);
\draw[thick,blue] (A.corner 3) -- (A.corner 4);
\node[regular polygon, regular polygon sides=5, minimum size=1.5cm] at (12,0) (A) {};
\draw[gray!20,fill] (A.corner 3) -- (A.corner 4) -- (A.corner 5) -- cycle;
\node[regular polygon, regular polygon sides=5, minimum size=1.54cm, draw] at (12,0) (B) {};
\draw[gray] (A.corner 2) -- (A.corner 5) -- (A.corner 3);
\draw[thick,blue] (A.corner 3) -- (A.corner 4);
\end{tikzpicture}
\end{center}
\smallskip

Let $y_n$ be the number of such partitions for a convex $(n+3)$-gon. 
Note that having a triangle over a fixed side (base) splits the polygon into two or three parts. If the triangle over the fixed side has another side on the polygon, the second part is a convex $(n+2)$-gon. Otherwise, there are two polygons around the triangle with less than $n+2$ sides each. 

\begin{figure}[h] 
\begin{tikzpicture}
\node[regular polygon, regular polygon sides=9, minimum size=3cm] at (0,0) (A) {};
\draw[gray!20,fill] (A.corner 4) -- (A.corner 5) -- (A.corner 6) -- cycle;
\node[regular polygon, regular polygon sides=9, minimum size=3.04cm, draw] at (0,0) (B) {};
\draw[gray] (A.corner 6) -- (A.corner 4);
\draw[thick,blue] (A.corner 5) -- (A.corner 6);
\node[below] at (0,-1.7) {\small (i) Splitting into two parts};
\node[regular polygon, regular polygon sides=9, minimum size=3cm] at (5,0) (A) {};
\draw[gray!20,fill] (A.corner 5) -- (A.corner 6) -- (A.corner 2) -- cycle;
\node[regular polygon, regular polygon sides=9, minimum size=3.05cm, draw] at (5,0) (B) {};
\draw[gray] (A.corner 5) -- (A.corner 6) -- (A.corner 2) -- cycle;
\draw[thick,blue] (A.corner 5) -- (A.corner 6);
\node[below] at (5,-1.7) {\small (ii) Splitting into three parts};
\end{tikzpicture}
\end{figure}

In both cases, the number of (colored) partitions of each of the polygons involved in this initial splitting can be found by means of Theorems~\ref{thm:qrecursion} and \ref{thm:qBell} with $a=1$ and $b=2$. With
\begin{equation}\label{eqn:special_phat}
\p(n,k) =\frac{\binom{n+k+1}{k}}{(n+k+1)}\frac{k!}{n!}B_{n,k}(1!c_1,2!c_2,\dots),
\end{equation}
the number of sought-after partitions with $k+1$ parts (including the triangle) is given by
\begin{equation*}
 \sum_{\ell=0}^{k}\sum_{m=\ell}^n \p(n-m,k-\ell)\p(m,\ell) = \frac{2}{n+2}\,
 \binom{n+k+1}{k}\frac{k!}{n!}B_{n,k}(1!c_1,2!c_2,\dots),
\end{equation*}
where $\ell=0$ corresponds to the case (i) in the above figure. Letting $c_j=1$ for every $j$ (only one color) and adding over all possible values of $k$, we arrive at
\begin{equation*}
y_n = \frac{2}{n+2}\sum_{k=1}^n\binom{n+k+1}{k}\binom{n-1}{k-1} \text{ for } n\ge 1.
\end{equation*}
This gives the sequence $1,2,7,28,121,550,\dots$, which is listed as A010683 in \cite{Sloane}.

\subsection*{Generalization}
Assume now that we request to have a $(d+1)$-gon over a fixed side of a convex $(n+d+1)$-gon. In this case, the $(d+1)$-gon splits the rest of the polygon into at most $d$ parts, each of which is a convex $(m_i+2)$-gon that may be dissected into $\ell_i$ parts ($\ell_i$ possibly being zero) so that the total number of parts, not including the $(d+1)$-gon, is $k$. 
\begin{figure}[h] 
\begin{tikzpicture}
\node[regular polygon, regular polygon sides=12, minimum size=3cm] at (0,0) (A) {};
\draw[gray!15,fill] (A.corner 7) -- (A.corner 8) -- (A.corner 10) -- (A.corner 1) -- (A.corner 4) -- cycle;
\node[regular polygon, regular polygon sides=12, minimum size=3.04cm, draw] at (0,0) (B) {};
\draw[gray] (A.corner 8) -- (A.corner 10) -- (A.corner 1) -- (A.corner 4) -- (A.corner 7);
\draw[thick,blue] (A.corner 7) -- (A.corner 8);
\node[below] at (0,-1.7) {\small Example: Dodecagon with a pentagon over a fixed base};
\end{tikzpicture}
\end{figure}
Thus $\ell_1+\dots+\ell_d=k$ and $m_1+\dots+m_d=n$. Since each $(m_i+2)$-gon may be partitioned into $\ell_i$ parts in $\p(m_i,\ell_i)$ ways, we need the following more general formula. 

\begin{proposition} \label{prop:d-convolution}
For $\p(n,k)$ as in \eqref{eqn:special_phat}, we have
\begin{equation*}
 \sum_{\substack{\ell_1+\dots+\ell_d=k\\m_1+\dots+m_d=n}} \p(m_1,\ell_1)\dots \p(m_d,\ell_d)
 = \frac{d\binom{n+k+d}{k}}{(n+k+d)} \frac{k!}{n!}B_{n,k}(1!c_1,2!c_2,\dots).
\end{equation*}
\end{proposition}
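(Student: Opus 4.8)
The plan is to argue by induction on $d$, reducing each inductive step to the two-fold convolution identity for partial Bell polynomials recorded in Lemma~\ref{lem:BellConvolution}. For $d=1$ there is nothing to sum and the asserted identity is exactly the definition \eqref{eqn:special_phat}. So suppose $d\ge 2$ and that the proposition holds with $d$ replaced by $d-1$. Writing $x=(1!c_1,2!c_2,\dots)$ and grouping the first $d-1$ of the $d$ factors, I would first rewrite
\begin{align*}
 &\sum_{\substack{\ell_1+\dots+\ell_d=k\\m_1+\dots+m_d=n}} \p(m_1,\ell_1)\cdots \p(m_d,\ell_d) \\
 &\qquad= \sum_{\ell=0}^{k}\sum_{m=0}^{n}
   \Bigg(\sum_{\substack{\ell_1+\dots+\ell_{d-1}=\ell\\m_1+\dots+m_{d-1}=m}}
   \p(m_1,\ell_1)\cdots\p(m_{d-1},\ell_{d-1})\Bigg)\p(n-m,k-\ell),
\end{align*}
and then invoke the induction hypothesis to replace the inner sum by $\frac{(d-1)\binom{m+\ell+d-1}{\ell}}{m+\ell+d-1}\frac{\ell!}{m!}B_{m,\ell}(x)$; terms with $m<\ell$ or $n-m<k-\ell$ drop out, in accordance with the vanishing conventions for Bell polynomials.

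The heart of the argument is then to recognize the resulting double sum as an instance of Lemma~\ref{lem:BellConvolution} via the factorization \eqref{eqn:product} of its summand. Comparing the factor attached to $B_{m,\ell}(x)$ with the second bracket of \eqref{eqn:product} and the factor attached to $B_{n-m,k-\ell}(x)$ with the first bracket, one is forced to take $\tau=n+k+d$ and $\alpha(\ell,m)=(n-m)+(k-\ell)+1=n+k+1-m-\ell$. This $\alpha$ has degree at most one in $\ell$ and $m$, as the lemma requires, and it satisfies $\tau-\alpha(\ell,m)=m+\ell+d-1$, so the two brackets of \eqref{eqn:product} become exactly $\frac{\binom{\alpha(\ell,m)}{k-\ell}}{\alpha(\ell,m)}\frac{(k-\ell)!}{(n-m)!}$ and $\frac{\binom{\tau-\alpha(\ell,m)}{\ell}}{\tau-\alpha(\ell,m)}\frac{\ell!}{m!}$; moreover $\alpha(\ell,m)\ge 1$ and $\tau-\alpha(\ell,m)\ge d-1\ge 1$ over the whole range of summation (since $\ell\le k$ and $m\le n$), so the denominators cause no trouble. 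Evaluating the special values $\alpha(0,0)=n+k+1$, $\alpha(k,n)=1$, $\tau-\alpha(0,0)=d-1$ turns the right-hand side of Lemma~\ref{lem:BellConvolution} into $\frac{d}{d-1}\binom{n+k+d}{k}B_{n,k}(x)$.

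To finish, I would use \eqref{eqn:product} once more to identify the left-hand side of Lemma~\ref{lem:BellConvolution} as $\frac{(n+k+d)\,n!}{(d-1)\,k!}$ times the double sum displayed above; equating the two sides of the lemma and solving for the double sum yields
\begin{align*}
 \sum_{\substack{\ell_1+\dots+\ell_d=k\\m_1+\dots+m_d=n}} \p(m_1,\ell_1)\cdots \p(m_d,\ell_d)
 = \frac{d\binom{n+k+d}{k}}{n+k+d}\,\frac{k!}{n!}\,B_{n,k}(x),
\end{align*}
which completes the induction and the proof. I do not expect any genuine obstacle: the only delicate points are guessing $\tau$ and $\alpha(\ell,m)$ so that the prefactors split off precisely as in \eqref{eqn:product}, and keeping careful track of the extra constants (the factor $n!/k!$, the factor $d-1$ coming from the induction hypothesis, and the global factor $\tau$) introduced by Lemma~\ref{lem:BellConvolution}.
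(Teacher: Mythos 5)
Your proposal is correct and follows essentially the same route as the paper: induction on $d$, splitting the $d$-fold convolution into a single factor convolved with a $(d-1)$-fold one, and applying Lemma~\ref{lem:BellConvolution} with $\tau=n+k+d$ via the factorization \eqref{eqn:product}. The only cosmetic difference is that you attach the induction hypothesis to the $(m,\ell)$ block rather than the $(n-m,k-\ell)$ block, which merely swaps the roles of $\alpha$ and $\tau-\alpha$ and yields the same evaluation of the lemma's right-hand side.
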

\begin{proof} 
We will prove this by induction on $d$. When $d=1$ the formula reduces to a tautology. For $d=2$ the formula was proven as part of the proof of Theorem~\ref{thm:qBell}.  For any $d\ge 2$, let us assume that the statement is true for $d-1$.  Then
\begin{align*}
 \sum_{\substack{\ell_1+\dots+\ell_d=k\\m_1+\dots+m_d=n}} &\p(m_1,\ell_1)\cdots \p(m_d,\ell_d) \\
 &= \sum_{\ell=0}^k \sum_{m=0}^n \p(m,\ell) \!\!\! \sum_{\substack{\ell_1+\dots+\ell_{d-1}=k-\ell\\m_1+\dots+m_{d-1}=n-m}} \! \p(m_1,\ell_1)\cdots \p(m_{d-1},\ell_{d-1})\\
&=\frac{(d-1) k!}{n!}\sum_{\ell=0}^k\sum_{m=0}^n \frac{\binom{m+\ell+1}{\ell}}{(m+\ell+1)}
\frac{\binom{n-m+k-\ell+d-1}{k-\ell}}{(n-m+k-\ell+d-1)}\frac{\binom{n}{m}}{\binom{k}{\ell}}B_{m,\ell}B_{n-m,k-\ell} \\
&=\frac{d\binom{n+k+d}{k}}{(n+k+d)}\frac{k!}{n!}B_{n,k}(1!c_1,2!c_2,\dots),
\end{align*}
using Lemma~\ref{lem:BellConvolution} with $\tau=n+k+d$ and $\alpha=n-m+k-\ell+d-1$.
\end{proof}

\begin{corollary}
The total number of partitions $y_n$ of a convex $(n+d+1)$-gon having a $(d+1)$-gon over a fixed side is given by
\begin{equation*}
y_n = \frac{d}{n+d}\sum_{k=1}^n\binom{n+k+d-1}{k}\binom{n-1}{k-1} \text{ for } n\ge 1.
\end{equation*}
\end{corollary}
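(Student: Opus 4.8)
The plan is to read the count of partitions with a prescribed number of parts directly off Proposition~\ref{prop:d-convolution}, specialize the colors to $c_j=1$, sum over the number of parts, and simplify the resulting binomial coefficient.

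First I would recall the combinatorial decomposition set up just before Proposition~\ref{prop:d-convolution}: a partition of the convex $(n+d+1)$-gon carrying a $(d+1)$-gon over the fixed side and having exactly $k$ parts besides that $(d+1)$-gon is the same datum as a tuple $(m_1,\ell_1),\dots,(m_d,\ell_d)$ with $m_1+\dots+m_d=n$ and $\ell_1+\dots+\ell_d=k$, together with a choice of partition of each surrounding $(m_i+2)$-gon into $\ell_i$ parts (a degenerate factor $m_i=0$ forcing $\ell_i=0$ and contributing $\p(0,0)=1$). Hence the number of such partitions with exactly $k$ additional parts is precisely the left-hand side of Proposition~\ref{prop:d-convolution}, namely $\dfrac{d\binom{n+k+d}{k}}{n+k+d}\,\dfrac{k!}{n!}B_{n,k}(1!c_1,2!c_2,\dots)$.

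Next I would set $c_j=1$ for every $j$, so that $\frac{k!}{n!}B_{n,k}(1!,2!,\dots)=\binom{n-1}{k-1}$ (the evaluation already used repeatedly in Section~\ref{sec:applications}), and sum over all admissible $k$. For $n\ge 1$ the factor $\binom{n-1}{k-1}$ vanishes unless $1\le k\le n$, so the total number of partitions is
\[
 y_n=\sum_{k=1}^{n}\frac{d\binom{n+k+d}{k}}{n+k+d}\binom{n-1}{k-1}.
\]

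Finally I would simplify the coefficient via the elementary identity
\[
 \frac{1}{n+k+d}\binom{n+k+d}{k}=\frac{(n+k+d-1)!}{k!\,(n+d)!}=\frac{1}{n+d}\binom{n+k+d-1}{k},
\]
which gives $y_n=\frac{d}{n+d}\sum_{k=1}^{n}\binom{n+k+d-1}{k}\binom{n-1}{k-1}$, as claimed. I expect no real obstacle here beyond Proposition~\ref{prop:d-convolution} itself; the only points meriting a word of care are the bookkeeping of the degenerate digon factors in the decomposition and the check that the range $1\le k\le n$ already contains all nonzero terms.
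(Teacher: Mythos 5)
Your proposal is correct and is exactly the route the paper intends: the corollary is stated as an immediate consequence of Proposition~\ref{prop:d-convolution}, obtained by setting $c_j=1$ (so $\tfrac{k!}{n!}B_{n,k}(1!,2!,\dots)=\binom{n-1}{k-1}$), summing over $k$, and absorbing the factor $\tfrac{1}{n+k+d}$ into the binomial coefficient via $\tfrac{1}{n+k+d}\binom{n+k+d}{k}=\tfrac{1}{n+d}\binom{n+k+d-1}{k}$. Your attention to the degenerate $\p(0,0)=1$ factors and the range of $k$ is consistent with the paper's setup and requires no further justification.
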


For cases where the partitions may be colored and there are restrictions on the number of sides of the polygons (along the lines of the problems discussed in previous sections), it is useful to have the following straightforward generalization of Proposition~\ref{prop:d-convolution}.

\begin{proposition}
For $\p(n,k)$ defined by
\begin{equation*}
\p(n,k) =\frac{\binom{an+(b-1)k+1}{k}}{(n+(b-1)k+1)}\frac{k!}{n!}B_{n,k}(1!c_1,2!c_2,\dots),
\end{equation*}
we have 
\begin{equation*}
 \sum_{\substack{\ell_1+\dots+\ell_d=k\\m_1+\dots+m_d=n}} \p(m_1,\ell_1)\dots \p(m_d,\ell_d)
 = \frac{d\binom{an+(b-1)k+d}{k}}{(an+(b-1)k+d)} \frac{k!}{n!}B_{n,k}(1!c_1,2!c_2,\dots).
\end{equation*}
\end{proposition}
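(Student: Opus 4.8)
The plan is to mirror, almost verbatim, the proof of Proposition~\ref{prop:d-convolution}: the only structural change is that the two affine ``shift'' forms fed to Lemma~\ref{lem:BellConvolution} now involve $an+(b-1)k$ in place of $n+k$. I would argue by induction on $d$. The base case $d=1$ is a tautology, since the left-hand side reduces to the single summand $\p(n,k)$ while the right-hand side, for $d=1$, is exactly $\frac{\binom{an+(b-1)k+1}{k}}{an+(b-1)k+1}\frac{k!}{n!}B_{n,k}(1!c_1,2!c_2,\dots)=\p(n,k)$.

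For the inductive step I would fix $d\ge 2$, assume the identity for $d-1$, split off the last pair, and write (with $\ell=\ell_d$, $m=m_d$)
\[
 \sum_{\substack{\ell_1+\dots+\ell_d=k\\m_1+\dots+m_d=n}} \p(m_1,\ell_1)\cdots \p(m_d,\ell_d)
 = \sum_{\ell=0}^{k}\sum_{m=0}^{n} \p(m,\ell)\!\!\!\sum_{\substack{\ell_1+\dots+\ell_{d-1}=k-\ell\\m_1+\dots+m_{d-1}=n-m}}\!\!\!\p(m_1,\ell_1)\cdots \p(m_{d-1},\ell_{d-1}),
\]
apply the inductive hypothesis to the inner sum (which produces a factor $d-1$ alongside a closed form in $B_{n-m,k-\ell}$) and insert the closed form for $\p(m,\ell)$. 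Taking $x_j=j!c_j$, $\tau=an+(b-1)k+d$, and $\alpha(\ell,m)=a(n-m)+(b-1)(k-\ell)+(d-1)$ — an affine function of $\ell$ and $m$, so Lemma~\ref{lem:BellConvolution} applies — one checks that $\tau-\alpha(\ell,m)=am+(b-1)\ell+1$, whereupon identity \eqref{eqn:product} recognizes the summand as $(d-1)\tfrac{k!}{n!}$ times (the $\tau$-weighted summand of) the left-hand side of Lemma~\ref{lem:BellConvolution}.

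It then remains to evaluate the right-hand side supplied by Lemma~\ref{lem:BellConvolution}. Since $\alpha(0,0)=an+(b-1)k+(d-1)$ we have $\tau-\alpha(0,0)=1$, while $\alpha(k,n)=d-1$; hence the prefactor $\frac{\tau-\alpha(0,0)+\alpha(k,n)}{\alpha(k,n)(\tau-\alpha(0,0))}$ equals $\frac{d}{d-1}$, and after multiplying by the $(d-1)$ coming from the inductive hypothesis and dividing by $\tau$, the double sum collapses to $\frac{d\binom{an+(b-1)k+d}{k}}{an+(b-1)k+d}\frac{k!}{n!}B_{n,k}(1!c_1,2!c_2,\dots)$, which is the claim. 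As in the earlier proofs, the conventions $B_{\nu,\kappa}=0$ unless $\nu\ge\kappa\ge 0$ and $B_{\nu,0}=0$ for $\nu>0$ let one pass between the range $m\ge 0$ above and the range $m\ge\ell$ in Lemma~\ref{lem:BellConvolution}, and the forms $\alpha(\ell,m)=a(n-m)+(b-1)(k-\ell)+(d-1)$ and $\tau-\alpha(\ell,m)=am+(b-1)\ell+1$ stay strictly positive on the range of summation, so no denominator vanishes. I expect no genuine obstacle beyond the bookkeeping; the one point that must be verified with care is the choice of $\alpha$ — that it is degree at most one in $\ell,m$, and that $\alpha(\ell,m)$ and $\tau-\alpha(\ell,m)$ match precisely the two factors in \eqref{eqn:product} — after which the argument is the same constant-chasing that closed the proofs of Theorem~\ref{thm:qBell} and Proposition~\ref{prop:d-convolution}.
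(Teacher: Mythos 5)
Your proof is correct and is exactly the argument the paper intends when it calls this a ``straightforward generalization'' of Proposition~\ref{prop:d-convolution}: the same induction on $d$, the same splitting off of the last pair $(\ell_d,m_d)$, and Lemma~\ref{lem:BellConvolution} with $\tau=an+(b-1)k+d$ and $\alpha(\ell,m)=a(n-m)+(b-1)(k-\ell)+(d-1)$, giving $\tau-\alpha(0,0)=1$, $\alpha(k,n)=d-1$, and hence the prefactor $\tfrac{d}{d-1}$ that cancels the inductive factor $d-1$. (You also correctly read the denominator in the statement as $an+(b-1)k+1$; the printed $n+(b-1)k+1$ is a typo, as comparison with $q(n,k)$ in the proof of Theorem~\ref{thm:qBell} confirms.)
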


\medskip
We conclude the paper with a related for the interested reader.
\subsection*{Problem}
For a convex polygon, find the total number of partitions made by noncrossing diagonals that (i) contain exactly one triangle, (ii) contain exactly one $(d+1)$-gon.


\end{document}